\theoremstyle{plain}
   \newtheorem {thm}{Theorem}[section]
   \newtheorem*{RR}{The Rogers-Ramanujan Identities}
   \newtheorem*{aRR}{The $a$-generalized Rogers-Ramanujan Identities}
   \newtheorem*{BaileyL}{Bailey's Lemma}
   \newtheorem{lemma}[thm]{Lemma}
   \newtheorem{prop}[thm]{Proposition}
\theoremstyle{definition}
   \newtheorem {rmk}[thm]{Remark}
   \newtheorem{definition}[thm]{Definition}
\numberwithin{equation}{section}
\newcommand{\binomial}[2]{ \genfrac{(}{)}{0pt}{}{#1}{#2} }
\title{$q$-Difference Equations and \\
Identities of the Rogers-Ramanujan-Bailey Type}
\author{Andrew V. Sills\\  
\small{Department of Mathematics, Rutgers University}\\
\small{110 Frelinghuysen Road}\\
\small{Hill Center--Busch Campus, Piscataway, NJ 08854}\\ \\
\small{\textit{Phone:} 1-732-445-3488, \textit{Fax:} 1-732-445-5530}\\ \\
\small{\textit{e-mail:} \texttt{asills@math.rutgers.edu}}\\
\small{\textit{URL:} \texttt{http://www.math.rutgers.edu/\~{}asills}}\\  \\
\small{\textit{2000 AMS Subject Codes:} 11B65. 39A13, 05A19, 33D15 }\\
\small{\textit{Keywords:} Rogers-Ramanujan identities, $q$-difference equations, $q$-series}}
\date{June 21, 2004}
\begin{document}
\maketitle

\begin{abstract}
 In a recent paper, I defined the ``standard multiparameter Bailey pair" 
(SMPBP) and
demonstrated that all of the classical Bailey pairs considered by W.N. Bailey
in his famous paper (\textit{Proc. London Math. Soc. (2)}, 
\textbf{50} (1948), 1--10)
arose as special cases of the SMPBP.  
Additionally, I was able to find a number of
new Rogers-Ramanujan type identities.   From a given Bailey pair, normally
only one or two Rogers-Ramanujan type identities follow immediately.  In this
present work, I present the set of $q$-difference equations associated with 
the SMPBP, and use these $q$-difference equations to deduce the 
complete families of 
Rogers-Ramanujan type identities.
\end{abstract}

\section{Introduction}
\subsection{Overview} \label{ov}
Recall the famous Rogers-Ramanujan identities:
\begin{RR} 
 \begin{equation}\label{RR1}
   \sum_{n=0}^\infty \frac{q^{n^2}}{(q;q)_n} = 
   \frac{(q^2, q^3, q^5; q^5)_\infty}{(q;q)_\infty},
 \end{equation} and
\begin{equation}\label{RR2}
   \sum_{n=0}^\infty \frac{q^{n^2+n}}{(q;q)_n} = 
   \frac{(q, q^4, q^5; q^5)_\infty}{(q;q)_\infty},
 \end{equation}
where \[ (a;q)_m = \prod_{j=0}^{m-1} (1-aq^j), \]
      \[ (a;q)_\infty = \prod_{j=0}^\infty (1-aq^j), \] and
      \[ (a_1, a_2, \dots, a_r; q)_\infty 
= (a_1;q)_\infty (a_2;q)_\infty \dots (a_r;q)_\infty, \]
\end{RR} 
(Although the results in this paper may be considered purely from the
point of view of formal power series, they also yield identities of
analytic functions provided $|q|<1$.)

The Rogers-Ramanujan identities were discovered by L.~J.~Rogers~\cite{ljr1894},
and were rediscovered independently by S. Ramanujan~\cite{MacMahon} and
I. Schur~\cite{Schur}.  There are many series--product 
identities similar in form to the Rogers-Ramanujan identities, and 
were dubbed ``identities of the Rogers-Ramanujan type" by W. N. Bailey in~\cite{wnb2}.  

   Furthermore, Bailey found what he termed ``$a$-generalizations" of a number of
Rogers-Ramanujan type identities.  For example, the $a$-generalizations of
(\ref{RR1}) and (\ref{RR2}) are, respectively,
\begin{aRR} 
 \begin{equation}\label{aRR1}
   \sum_{n=0}^\infty \frac{a^{n} q^{n^2}}{(q;q)_n} = 
    \frac{1}{(aq;q)_\infty} \sum_{n=0}^\infty 
     \frac{(-1)^n a^{2n} q^{n(5n-1)/2} (1-aq^{2n}) (a;q)_n}{(1-a)(q;q)_n},
 \end{equation} and
\begin{equation}\label{aRR2}
   \sum_{n=0}^\infty \frac{ a^n q^{n(n+1)}}{(q;q)_n} = 
   \frac{1}{(aq;q)_\infty} \sum_{n=0}^\infty
 \frac{(-1)^n a^{2n} q^{n(5n+3)/2} (1-aq^{2n+1}) (aq;q)_n}{(q;q)_n},
 \end{equation}
\end{aRR} 

Notice that (\ref{RR1}) and (\ref{RR2}) are obtained from (\ref{aRR1}) and
(\ref{aRR2}) respectively by setting $a=1$ and applying Jacobi's triple product identity 
\cite[p. 12, equation (1.6.1)]{gr} to the right hand side.

If we define 
  \[ F_1(a) := F_1(a,q):= \sum_{n\geqq 0} \frac{a^{n} q^{n(n+1)}}{(q;q)_n} \]
 and 
   \[ F_2(a) := F_2(a,q):= \sum_{n\geqq 0} \frac{a^{n} q^{n^2}}{(q;q)_n}, \]
 it is well known (and easy to see) that $F_1(a)$ and $F_2(a)$ satisfy the following system of
 $q$-difference equations:
 
 \begin{eqnarray}
  F_1(a) &=& F_2(aq) \label{aRRqDiff1}\\  
  F_2(a) &=& F_1(a) + aq F_2(aq) \label{aRRqDiff2}
 \end{eqnarray}
 
 with initial conditions $F_1(0) = F_2(0) = 1$.
 
 A standard proof of $(\ref{aRR1})$ and $(\ref{aRR2})$ is to show that 
the right hand sides of $(\ref{aRR1})$ and $(\ref{aRR2})$ satisfy the same
$q$-difference equations and initial conditions as $F_1(a)$ and $F_2(a)$;
see, e.g., ~\cite[p. 183 ff.]{gea:numth}.
 
  The main goal of this paper is to show that the system of $q$-difference 
equations
given above is a special case of a much more general form, and that once this
general form is established, we may use it to derive new Rogers-Ramanujan-Bailey
type identities related to those studied in~\cite{avs:rrb}.

 More specifically, we make the following definition:
 \begin{definition} For $d$, $e$, $k \in \mathbb{Z}_+$, and
$1\leqq i \leqq k+d(e-1)$, let
\begin{equation} \label{Qdef} 
 Q^{(d,e,k)}_{i} (a) := \frac{1}{(a^e q^e; q^e)_\infty}
   \sum_{n\geqq 0} \frac{(-1)^n a^{[k+d(e-1)]n}  
    q^{[k+d(e-1)+\frac 12]dn^2 + [k+d(e-1)+\frac 12 -i ]dn } (1-a^i q^{(2n+1)di}) 
    (aq^d;q^d)_n}{(q^d;q^d)_n},
\end{equation}
\end{definition}
and subsequently we shall prove that the following theorem:

\begin{thm}\label{qde}
The $Q^{(d,e,k)}_i (a)$ satisfy the following system of $q$-difference equations
\begin{eqnarray}
  Q^{(d,e,k)}_{1} (a) &=& \frac{1-aq^d}{(a^e q^e;q^e)_d} 
Q^{(d,e,k)}_{de-d+k} (aq^d), \mbox{\ and}
   \label{qde1}\\
  Q^{(d,e,k)}_{i} (a) &=& Q^{(d,e,k)}_{i-1} (a)
    + \frac{(1-aq^d) a^{i-1} q^{d(i-1)}}{(a^e q^e;q^e)_d} 
Q^{(d,e,k)}_{de-d+k-i+1} 
    (aq^d) \label{qde2}
\end{eqnarray}
for  \[ 2\leqq i\leqq de-d+k,\]
with initial conditions 
\begin{equation} \label{initconds}
 Q^{(d,e,k)}_1 (0) = Q^{(d,e,k)}_2 (0) = \cdots = Q^{(d,e,k)}_{de-d+k} (0) = 1.
  \end{equation}
  \end{thm}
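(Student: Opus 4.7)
The plan is to unify both recurrences under the convention $Q^{(d,e,k)}_0(a) := 0$, which is consistent with formula (\ref{Qdef}) extended to $i=0$, since then the factor $(1 - a^0 q^{(2n+1)\cdot 0}) = 0$ annihilates every summand. Under this convention, (\ref{qde1}) becomes the $i=1$ instance of (\ref{qde2}), so it suffices to establish (\ref{qde2}) for $1 \leq i \leq K$, where I write $K := de - d + k$ throughout.

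The main computation evaluates $Q^{(d,e,k)}_i(a) - Q^{(d,e,k)}_{i-1}(a)$ term-by-term. All factors except one are shared between the two series; the differing piece, by direct expansion, rearranges as
\[ q^{(K+\frac12-i)dn}(1 - q^{dn}) + a^{i-1} q^{d(i-1)+(K-\frac12+i)dn}(1 - aq^{d(n+1)}). \]
The first summand, together with the $(q^d;q^d)_n$ denominator, kills the $n=0$ term and admits the shift $n \mapsto n+1$; the second extends $(aq^d;q^d)_n$ to $(aq^d;q^d)_{n+1} = (1-aq^d)(aq^{2d};q^d)_n$. After these rewrites, each sub-sum carries a common factor $(1-aq^d)$ and a common ratio $(aq^{2d};q^d)_n / (q^d;q^d)_n$.

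Reassembling into a single series, the bracketed coefficient factors as
\[ a^{i-1} q^{d(i-1)+(K-\frac12+i)dn}\bigl[\,1 - (aq^d)^{K-i+1} q^{(2n+1)d(K-i+1)}\,\bigr]. \]
Using $a^{Kn}q^{Kdn} = (aq^d)^{Kn}$ to rebase the $a$-power, this is precisely the series expansion of $Q^{(d,e,k)}_{K-i+1}(aq^d)$, save for the infinite-product prefactor. The replacement $a \mapsto aq^d$ inside $(a^eq^e;q^e)_\infty$ pulls out the finite product $(a^eq^e;q^e)_d$, supplying exactly the factor $\frac{1}{(a^eq^e;q^e)_d}$ in the statement. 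Collecting everything delivers (\ref{qde2}).

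The initial conditions are immediate from (\ref{Qdef}) at $a = 0$: the factor $a^{Kn}$ annihilates every term with $n \geq 1$ (since $K \geq 1$), while the $n=0$ term reduces to $1 - 0^i = 1$ for each $1 \leq i \leq K$. The main obstacle is the algebraic bookkeeping in the middle step, particularly recognising that the $q$-power $(2n+2)(K-i+1)d$ that emerges from the combined sum absorbs as $(aq^d)^{K-i+1}q^{(2n+1)d(K-i+1)}$; once this matching is spotted, no deeper ideas are required beyond careful manipulation of the $q$-series.
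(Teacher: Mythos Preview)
Your argument is correct and, for the recurrence (\ref{qde2}), follows exactly the paper's line: compute $(a^eq^e;q^e)_\infty\bigl(Q^{(d,e,k)}_i(a)-Q^{(d,e,k)}_{i-1}(a)\bigr)$, split the bracket into the piece $(1-q^{dn})$ (which shifts $n\mapsto n+1$) and the piece $(1-aq^{d(n+1)})$ (which extends the Pochhammer), pull out $(1-aq^d)a^{i-1}q^{d(i-1)}$, and recognise the remaining series as the definition of $Q^{(d,e,k)}_{K-i+1}(aq^d)$.

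The one genuine difference is your treatment of (\ref{qde1}). The paper first proves a separate Lemma expressing $Q^{(d,e,k)}_K(a)$ in an alternative form (with $(a;q^d)_n/(1-a)$ and $(1-aq^{2dn})$ in place of $(aq^d;q^d)_n$ and $(1-a^Kq^{(2n+1)dK})$), and then uses that Lemma to verify (\ref{qde1}) directly. You instead observe that extending (\ref{Qdef}) to $i=0$ gives $Q^{(d,e,k)}_0(a)=0$ identically, so that (\ref{qde1}) becomes the $i=1$ instance of (\ref{qde2}) and no auxiliary lemma is needed. This is a legitimate and slightly more economical route: the same term-by-term computation that proves (\ref{qde2}) for $2\leqq i\leqq K$ goes through verbatim at $i=1$, landing on $Q^{(d,e,k)}_K(aq^d)$ via its defining formula. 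What the paper's separate Lemma buys is an independent closed form for $Q^{(d,e,k)}_K(a)$ that makes the connection to the Bailey-pair side (\ref{WBL}) explicit; your unified argument is cleaner for proving the theorem itself but does not record that alternative representation.
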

  
Thus (\ref{qde1}), (\ref{qde2}), and (\ref{initconds}) uniquely determine 
$Q^{(d,e,k)}_i (a)$ as a double power series in $a$ and $q$.
  
We then prove that for particular values of $d$, $e$, and $k$,  various seemingly
different functions $F^{(d,e,k)}_i (a)$, to be defined later, satisfy the same 
$q$-difference equations and initial conditions.  From this,
families of Rogers-Ramanujan type identities are established.

 In order to motivate the definition of the various $F^{(d,e,k)}_i (a)$, we will
need to review some background material in \S\ref{background}.  Next, in 
\S\ref{Qpf}, Theorem~\ref{qde} will be proved.  In \S\ref{F}, several instances
of the $F^{(d,e,k)}_i (a)$ will be derived, from which families of 
Rogers-Ramanujan
type identities will be deduced.
In \S\ref{conc}, I comment on how this work fits into
context with previous work, and suggest a promising direction for further
research.
  
\subsection{Background}\label{background}

\begin{definition}
A pair of sequences $\left(\alpha_n (a,q),\beta_n(a,q)\right)$ is called a 
\emph{Bailey pair} if
for $n\geqq 0$, 
   \begin{equation} \label{BPdef}
      \beta_n (a,q) = \sum_{r=0}^n \frac{\alpha_r(a,q) }{(q;q)_{n-r} (aq;q)_{n+r}}.
   \end{equation}
\end{definition}
  
In~\cite{wnb1} and~\cite{wnb2}, Bailey proved the fundamental
result now known as ``Bailey's lemma" (see also~\cite[Chapter 3]{GEA:qs}).
\begin{BaileyL}
If $(\alpha_r (a,q), \beta_j (a,q))$ form a Bailey pair, then
\begin{gather} 
  \frac{1}{ (\frac{aq}{\rho_1};q)_n ( \frac{aq}{\rho_2};q)_n} 
  \sum_{j\geqq 0} \frac{ (\rho_1;q)_j (\rho_2;q)_j 
     (\frac{aq}{\rho_1 \rho_2} ;q)_{n-j}}
     {(q;q)_{n-j}} \left( \frac{aq}{\rho_1 \rho_2} \right)^j \beta_j(a,q) \nonumber\\
 = \sum_{r=0}^n \frac{ (\rho_1;q)_r (\rho_2;q)_r}
    { (\frac{aq}{\rho_1};q)_r (\frac{aq}{\rho_2};q)_r (q;q)_{n-r} (aq;q)_{n+r}}
    \left( \frac{aq}{\rho_1 \rho_2} \right)^r \alpha_r(a,q). \label{BL}
\end{gather}
\end{BaileyL}

In~\cite{avs:rrb}, I defined the ``standard multiparameter Bailey pair" (SMPBP) via
\begin{equation}  \label{alphadef}
    \alpha^{(d,e,k)}_{n} (a,b,q) := 
        \left\{  \begin{array}{ll}
     \parbox{4cm}{\[ \frac{a^{(k-d+1)r/e} q^{(dk - d^2 + d) r^2/e} 
       (a^{1/e} q^{2d/e};q^{2d/e})_r (a^{1/e};q^{d/e})_r}
       {b^{r/e} {(a^{1/e} b^{-1/e} q^{d/e}; q^{d/e})}
        (a^{1/e} ;q^{2d/e})_r (q^{d/e};q^{d/e})_r }, \]}\\
            &\mbox{if $n= dr$, and} \\
      0,                                      &\mbox{otherwise,}
              \end{array} \right.
    \end{equation}
 with the corresponding $\beta^{(d,e,k)}_n (a,b,q)$ determined by (\ref{BPdef}),
 i.e.
 \begin{equation}\label{SMPBPbetadef}
   \beta_n^{(d,e,k)} (a,b,q) 
   := \sum_{r=0}^n \frac{\alpha_r^{(d,e,k)}(a,b,q)}{(q;q)_{n-r} (aq;q)_{n+r}},
   \end{equation}
 and showed that by specializing $d$, $e$, and $k$ to particular values, we could
 recover all of the Bailey pairs studied by Bailey in ~\cite{wnb1} and ~\cite{wnb2},
 and additionally derive many new Bailey pairs which lead to elegant new 
 Rogers-Ramanujan type identities.  As usual, Rogers-Ramanujan type identities
 result from inserting Bailey pairs into certain limiting cases of Bailey's lemma.
 Here, we will focus on the limiting case $n,\rho_1,\rho_2\to\infty$ of (\ref{BL}),
 with the $b=0$ case of the SMPBP as the Bailey pair inserted, i.e.
 \begin{equation} \label{WBL}
   \sum_{n\geqq 0} a^{en} q^{en^2} \beta^{(d,e,k)}_n(a^e,0,q^e) =
   \frac{1}{(a^e q^e;q^e)_\infty} \sum_{n\geqq 0} a^{en} q^{en^2}
   \alpha^{(d,e,k)}_n (a^e,0,q^e).  
 \end{equation} 
 
 \begin{rmk}
 It will become clear subsequently that  $Q^{(d,e,k)}_{de-d+k} (a)$ 
 in fact equals the right hand side 
 of (\ref{WBL}).
 \end{rmk}
 
 \begin{rmk}
 It is well known that Rogers-Ramanujan type identities 
 end to occur in closely related families,
 e.g. there are two Rogers-Ramanujan identities~(\ref{RR1}), (\ref{RR2}), 
 three Rogers-Selberg identities
 ~\cite[p. 421, equations (1.3)--(1.5)]{wnb1},
 four Dyson mod 27 identities~\cite[p. 433, equations (B1--B4)]{wnb1}, etc.  
 However, only one or two members of a family
 can normally be determined immediately from an instance of the SMPBP.  We 
 can derive all $k+d(e-1)$ members of a given family with the aid of 
 (\ref{qde1}) and (\ref{qde2}).
 \end{rmk}
 
 \section{Proof of Theorem~\ref{qde} and related results}\label{Qpf}
 Before launching into the proof of Theorem~\ref{qde}, let us establish
 the following lemma:
 \begin{lemma} 
 For $d$, $e$, $k\in \mathbb Z_+$, and $1\leqq i \leqq k+d(e-1)$,
 \begin{equation}\label{lem}
 Q^{(d,e,k)}_{de-d+k} (a) = \frac{1}{(a^e q^e; q^e)_\infty}
 \sum_{n\geqq 0} \frac{(-1)^n a^{(de-d+k)n} q^{(de-d+k+\frac 12)dn^2 - \frac d2 n}
 (1-aq^{2dn}) (a;q^d)_n}{(1-a)(q^d;q^d)_n}.
 \end{equation}
 \end{lemma}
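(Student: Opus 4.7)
The plan is to substitute $i = K := de - d + k$ directly into the definition~(\ref{Qdef}) and to massage the resulting series into the form asserted by~(\ref{lem}). With this choice, the coefficient $[k + d(e-1) + \tfrac{1}{2} - i]$ of $dn$ in the $q$-exponent collapses to $\tfrac{1}{2}$, while the binomial-like factor in the summand becomes $(1 - a^{K} q^{(2n+1)dK})$. So after cancelling the common prefactor $1/(a^e q^e;q^e)_\infty$, the task reduces to proving
\begin{equation*}
 \sum_{n\geqq 0} \frac{(-1)^n a^{Kn} q^{(K+\frac{1}{2})dn^2 + \frac{d}{2}n}(1 - a^K q^{(2n+1)dK})(aq^d;q^d)_n}{(q^d;q^d)_n}
 = \sum_{n\geqq 0} \frac{(-1)^n a^{Kn} q^{(K+\frac{1}{2})dn^2 - \frac{d}{2}n}(1 - aq^{2dn})(a;q^d)_n}{(1-a)(q^d;q^d)_n}.
\end{equation*}

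First, I would split the left-hand sum as $S_1 - S_2$ according to the factor $(1 - a^K q^{(2n+1)dK})$, then reindex $S_2$ via $n \mapsto n - 1$. A short routine calculation shows the shifted $q$-exponent becomes $(K + \tfrac{1}{2})dn^2 - \tfrac{d}{2}n$, with overall sign $(-1)^n$, so that $S_1$ (at $n$) and the shifted $S_2$ (at $n \geqq 1$) carry matching $q^{\pm dn/2}$ powers. I would then isolate the $n = 0$ term of $S_1$, which equals $1$, and combine the remaining $n \geqq 1$ contributions. Using $(aq^d;q^d)_n = (aq^d;q^d)_{n-1}(1-aq^{dn})$ and $(q^d;q^d)_n = (q^d;q^d)_{n-1}(1-q^{dn})$, each combined summand carries a common bracket
\[ q^{dn/2}\cdot\frac{1-aq^{dn}}{1-q^{dn}} + q^{-dn/2}. \]

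The one place where care is required is the elementary identity
\[ q^{dn/2}(1-aq^{dn}) + q^{-dn/2}(1-q^{dn}) = q^{-dn/2}(1 - aq^{2dn}), \]
which rewrites that bracket as $q^{-dn/2}(1-aq^{2dn})/(1-q^{dn})$. Combined with the relation $(a;q^d)_n = (1-a)(aq^d;q^d)_{n-1}$, this immediately recovers the summand on the right-hand side for $n \geqq 1$, while the isolated constant folds back in as the $n = 0$ term (the right-hand summand at $n = 0$ also equals $1$). The main obstacle is purely bookkeeping: verifying the $q$-exponent after $n \mapsto n-1$ and applying the two-line algebraic identity above without sign errors. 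No deeper $q$-series machinery is needed, since the lemma is essentially a symmetrization of the one-sided sum in~(\ref{Qdef}) into the ``Bailey-pair'' form featuring the factor $(1-aq^{2dn})/(1-a)$.
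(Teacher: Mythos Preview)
Your proposal is correct and is essentially the paper's own argument run in the opposite direction: the paper starts from the right-hand side of~(\ref{lem}), splits via $1-aq^{2dn}=q^{dn}(1-aq^{dn})+(1-q^{dn})$, and shifts one piece to reach the definition~(\ref{Qdef}) with $i=K$, whereas you start from~(\ref{Qdef}), split on $(1-a^{K}q^{(2n+1)dK})$, shift, and recombine via the equivalent identity $q^{dn/2}(1-aq^{dn})+q^{-dn/2}(1-q^{dn})=q^{-dn/2}(1-aq^{2dn})$. The two key identities differ only by the factor $q^{-dn/2}$, so the proofs are the same telescoping computation traversed in reverse.
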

 
 \begin{proof}
 \begin{eqnarray*}
 &&\sum_{n\geqq 0} \frac{(-1)^n a^{(de-d+k)n} 
 q^{(de-d+k+\frac 12)dn^2 - \frac d2 n}
 (1-aq^{2dn}) (a;q^d)_n}{(1-a)(q^d;q^d)_n}\\
 &=& \sum_{n\geqq 0} \frac{(-1)^n a^{(de-d+k)n} 
 q^{(de-d+k+\frac 12)dn^2 - \frac d2 n}
 (a;q^d)_n}{(1-a)(q^d;q^d)_n} \left\{ q^{dn}(1-aq^{dn}) + (1-q^{dn}) \right\} \\
 &=& \sum_{n\geqq 0} \frac{(-1)^n a^{(de-d+k)n} 
 q^{(de-d+k+\frac 12)dn^2 + \frac d2 n}
 (aq^d;q^d)_n}{(q^d;q^d)_n} \\
&& \qquad\qquad+ \sum_{n\geqq 1} \frac{(-1)^n a^{(de-d+k)n} 
 q^{(de-d+k+\frac 12)dn^2 - \frac d2 n}
 (aq^d;q^d)_{n-1}}{(q^d;q^d)_{n-1}} \\
 &=& \sum_{n\geqq 0} \frac{(-1)^n a^{(de-d+k)n} 
 q^{(de-d+k+\frac 12)dn^2 + \frac d2 n}
 (aq^d;q^d)_n}{(q^d;q^d)_n}\\ 
&&\qquad\qquad- \sum_{n\geqq 0} \frac{(-1)^n a^{(de-d+k)n + (de-d+k)} 
 q^{(de-d+k+\frac 12)dn^2 + (2k-2d-2de+\frac 12)dn + (de-d+k)d}
 (aq^d;q^d)_n}{(q^d;q^d)_n} \\
 &=& \sum_{n\geqq 0} \frac{(-1)^n a^{(de-d+k)n} 
 q^{(de-d+k+\frac 12)dn^2 + \frac d2 n}
 (aq^d;q^d)_n  (1-a^{de-d+k} q^{(de-d+k)(2n+1)d})}{(q^d;q^d)_n}\\ 
 &=& (a^e q^e; q^e)_\infty Q^{(d,e,k)}_{de-d+k} (a)
 \end{eqnarray*}
 \end{proof}
 
 \begin{proof}[Proof of Theorem~\ref{qde}] 
 First, (\ref{qde1}) is easily established:
  \begin{proof}[Proof of (\ref{qde1})]
 \begin{eqnarray*}
 &&\frac{1-aq^d}{(a^e q^e;q^e)_d} Q^{(d,e,k)}_{de-d+k} (a q^d)\\ 
 &=& \frac{1-aq^d}{(a^e q^e; q^e)_d} 
 \frac{1}{(a^e q^{(d+1)e};q^e)_\infty}
 \sum_{n\geqq 0} \frac{(-1)^n a^{(de-d+k)n} 
 q^{(de-d+k+\frac 12)dn^2 + (de-d+k-\frac 12) dn}
 (aq^d;q^d)_n  (1-aq^{(2n+1)d})}{(1-aq^d) (q^d;q^d)_n}\\
 &&\qquad\qquad\qquad\qquad\mbox{(by Lemma~\ref{lem})}\\
 &=& \frac{1}{(a^e q^e; q^e)_\infty} 
 \sum_{n\geqq 0} \frac{(-1)^n a^{(de-d+k)n} 
 q^{(de-d+k+\frac 12)dn^2 + (de-d+k-\frac 12) dn}
 (aq^d;q^d)_n  (1-aq^{(2n+1)d})}{(q^d;q^d)_n}\\
 &=& Q^{(d,e,k)}_1 (a).
 \end{eqnarray*}
 \end{proof}
Next, we establish (\ref{qde2}):
\begin{proof}[Proof of (\ref{qde2})]
\begin{eqnarray*}
 && (a^e q^e; q^e)_\infty \Big( Q^{(d,e,k)}_i (a) - Q^{(d,e,k)}_{i-1}(a) \Big) \\
 &=& 
 \sum_{n\geqq 0} \frac{(-1)^n a^{(de-d+k)n} 
    q^{(de-d+k+\frac 12)dn^2 + (de-d+k+\frac 12 -i )dn } (1-a^i q^{(2n+1)di}) 
    (aq^d;q^d)_n}{(q^d;q^d)_n}\\
&&\qquad - \sum_{n\geqq 0} \frac{(-1)^n a^{(de-d+k)n} 
    q^{(de-d+k+\frac 12)dn^2 + (de-d+k+\frac 32 -i )dn } (1-a^{i-1} q^{(2n+1)d(i-1)}) 
    (aq^d;q^d)_n}{(q^d;q^d)_n}\\
&=& \sum_{n\geqq 0} \frac{(-1)^n a^{(de-d+k)n} 
    q^{(de-d+k+\frac 12)dn^2 + (de-d+k+\frac 12)dn }  
    (aq^d;q^d)_n}{(q^d;q^d)_n}\\
&&\qquad \times \Big\{ q^{-idn}(1-a^i q^{(2n+1)di}) - q^{(1-i)dn} 
   (1-a^{i-1} q^{(2n+1)d(i-1)} ) \Big\}\\
&=& \sum_{n\geqq 0} \frac{(-1)^n a^{(de-d+k)n} 
    q^{(de-d+k+\frac 12)dn^2 + (de-d+k+\frac 12)dn }  
    (aq^d;q^d)_n}{(q^d;q^d)_n}\\
&&\qquad \times \Big\{ q^{-idn}(1-q^{dn}) +  a^{i-1} q^{dni-nd+di-d}
(1-a q^{d(n+1)}) \Big\}\\
&=& \sum_{n\geqq 1} \frac{(-1)^n a^{(de-d+k)n} 
    q^{(de-d+k+\frac 12)dn^2 + (de-d+k+\frac 12-i)dn }  
    (aq^d;q^d)_n}{(q^d;q^d)_{n-1}}\\
&& \qquad +
\sum_{n\geqq 0} \frac{(-1)^n a^{(de-d+k)n+i-1} 
    q^{(de-d+k+\frac 12)dn^2 + (de-d+k-\frac 12+i)dn + d(i-1) }  
    (aq^d;q^d)_{n+1}}{(q^d;q^d)_{n}}\\
&=& -\sum_{n\geqq 0} \frac{(-1)^n a^{(de-d+k)n +(de-d+k)} 
    q^{(de-d+k+\frac 12)dn^2 + (3de-3d+3k+\frac 32-i)dn + d(2de-2d+2k-i+1) }  
    (aq^d;q^d)_{n+1}}{(q^d;q^d)_{n}}\\
&& \qquad +
\sum_{n\geqq 0} \frac{(-1)^n a^{(de-d+k)n+i-1} 
    q^{(de-d+k+\frac 12)dn^2 + (de-d+k-\frac 12+i)dn + d(i-1) }  
    (aq^d;q^d)_{n+1}}{(q^d;q^d)_{n}}\\
&=& (1-aq^d)a^{i-1}q^{d(i-1)}\left(
    \sum_{n\geqq 0} \frac{(-1)^n a^{(de-d+k)n} 
    q^{(de-d+k+\frac 12)dn^2 + (de-d+k-\frac 12+i)dn}  
    (aq^{2d};q^d)_{n}}{(q^d;q^d)_{n}} \right.\\
  &&\quad \left. -\sum_{n\geqq 0} \frac{(-1)^n a^{(de-d+k)n +(de-d+k-i+1)} 
    q^{(de-d+k+\frac 12)dn^2 + (3de-3d+3k+\frac 32-i)dn + 2d(de-d+k-i+1) }  
    (aq^{2d};q^d)_n}{(q^d;q^d)_{n}} \right)\\
&=& (1-aq^d)a^{i-1}q^{d(i-1)}
    \sum_{n\geqq 0} \frac{(-1)^n a^{(de-d+k)n} 
    q^{(de-d+k+\frac 12)dn^2+ (de-d+k-\frac 12+i)dn  }  
    (aq^{2d};q^d)_{n}}{(q^d;q^d)_{n}}\\   
    &&\qquad\qquad\qquad\times(1-a^{de-d+k-i+1} q^{2d(de-d+k+1-i)(n+1)}) \\
 &=& (a^e q^e;q^e)_\infty (1-aq^d)a^{i-1}q^{d(i-1)}  Q^{(d,e,k)}_{de-d+k-i+1} (a q^d).
 \end{eqnarray*}
\end{proof} 
 The above proofs of (\ref{qde1}) and (\ref{qde2}) together with the routine 
 verification that (\ref{initconds}) holds by (\ref{Qdef}) establishes 
 Theorem~\ref{qde}.
 \end{proof}
 
 The right hand sides of Rogers-Ramanujan type identities (in $q$ only) are 
 expressible as infinite products.  Accordingly, we will need the following
 proposition.
 \begin{prop} \label{Q1}
 \[
 Q^{(d,e,k)}_i (1) = 
 \frac{( q^{di},q^{d(2ed-2d+2k+1-i)}, q^{d(2ed-2d+2k+1)} ;
 q^{d(2ed-2d+2k+1)})_\infty}{(q^e;q^e)_\infty}. \]
 \end{prop}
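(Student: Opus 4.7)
The plan is to set $a=1$ in the definition (\ref{Qdef}) and reduce the result to a bilateral theta series, which Jacobi's triple product identity evaluates as the desired infinite product.

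First I would substitute $a=1$ into (\ref{Qdef}). The factor $(aq^d;q^d)_n$ in the numerator becomes $(q^d;q^d)_n$, which cancels the denominator, leaving
\[
 Q^{(d,e,k)}_i(1) = \frac{1}{(q^e;q^e)_\infty} \sum_{n\geqq 0} (-1)^n q^{(K+\frac12)dn^2 + (K+\frac12-i)dn} \bigl(1 - q^{(2n+1)di}\bigr),
\]
where I abbreviate $K := k + d(e-1)$, so that $2K+1 = 2ed - 2d + 2k + 1$. Call this $N$; then the exponent in the product on the right-hand side of Proposition~\ref{Q1} involves $dN$ and $d(N-i)$.

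Next I would split the sum into two pieces according to the factor $1 - q^{(2n+1)di}$. The first piece is $\sum_{n\geqq 0} (-1)^n q^{f(n)}$ with $f(n) = (K+\tfrac12)dn^2 + (K+\tfrac12 - i)dn$, and the second piece is $\sum_{n\geqq 0} (-1)^n q^{g(n)}$ with $g(n) = (K+\tfrac12)dn^2 + (K+\tfrac12 + i)dn + id$. A routine calculation shows $f(-n-1) = g(n)$, so the substitution $n \mapsto -n-1$ (together with $(-1)^{-n-1} = -(-1)^n$) converts the second sum into $-\sum_{m\leqq -1}(-1)^m q^{f(m)}$. Thus the two pieces combine into the bilateral sum
\[
 \sum_{n \in \mathbb{Z}} (-1)^n q^{f(n)} = \sum_{n\in\mathbb{Z}} (-1)^n q^{\frac{Nd}{2}n^2 + \frac{(N-2i)d}{2}n}.
\]

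Finally I would apply Jacobi's triple product in the form
\[
 \sum_{n\in\mathbb{Z}} (-1)^n y^n x^{n(n-1)/2} = (x;x)_\infty (y;x)_\infty (x/y;x)_\infty
\]
with $x = q^{Nd}$ and $y = q^{(N-i)d}$, chosen so that $y^n x^{n(n-1)/2}$ reproduces the exponent $\frac{Nd}{2}n^2 + \frac{(N-2i)d}{2}n$. This yields $(q^{Nd};q^{Nd})_\infty (q^{(N-i)d};q^{Nd})_\infty (q^{id};q^{Nd})_\infty$. Dividing by $(q^e;q^e)_\infty$ and substituting $N = 2ed - 2d + 2k + 1$ gives exactly the stated product. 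The only genuine subtlety is the index-reversal step that fuses the two one-sided sums into a bilateral one; everything else is direct substitution and a single application of Jacobi's triple product, which the paper has already invoked in the remark following (\ref{aRR2}).
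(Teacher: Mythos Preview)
Your proposal is correct and follows essentially the same route as the paper: set $a=1$ so that $(aq^d;q^d)_n/(q^d;q^d)_n$ cancels, split on the factor $1-q^{(2n+1)di}$, use the index reflection $n\mapsto -n-1$ to fuse the two one-sided sums into a bilateral theta series, and then apply Jacobi's triple product. The paper carries out the reflection in two small steps (a shift $n\mapsto n+1$ followed by $n\mapsto -n$) rather than your single substitution, but the argument is the same.
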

 \begin{proof}
 \begin{eqnarray*}
 & & (q^e;q^e)_\infty Q^{(d,e,k)}(1)\\
 &=& \sum_{n\geqq 0} (-1)^n   
    q^{(k+de-d+\frac 12)dn^2 + (k+de-d+\frac 12 -i )dn } (1-q^{(2n+1)di}) \\
 &=& \sum_{n\geqq 0} (-1)^n   
    q^{(k+de-d+\frac 12)dn^2 + (k+de-d+\frac 12 -i )dn }  
   -\sum_{n\geqq 0} (-1)^n   
    q^{(k+de-d+\frac 12)dn^2 + (k+de-d+\frac 12 -i )dn+(2n+1)di }  \\ 
  &=& \sum_{n\geqq 0} (-1)^n   
    q^{(k+de-d+\frac 12)dn^2 - (-k-de+d-\frac 12 +i )dn }  
   +\sum_{n\geqq 1} (-1)^n   
    q^{(k+de-d+\frac 12)dn^2 + (-k-de+d-\frac 12 +i )dn}\\
  &=& \sum_{n=-\infty}^\infty (-1)^n   
    q^{(k+de-d+\frac 12)dn^2 - (-k-de+d-\frac 12 +i )dn }  \\
  &=& ( q^{di},q^{d(2ed-2d+2k+1-i)}, q^{d(2ed-2d+2k+1)} ;
 q^{d(2ed-2d+2k+1)})_\infty\\
  &&\qquad\qquad\qquad
   \mbox{(by Jacobi's triple product identity~\cite[p. 12, (1.6.1)]{gr}. }
  \end{eqnarray*}

 \end{proof}
 
 \section{Rogers-Ramanujan-Bailey type identities}\label{F}
 \subsection{The general procedure}
First, define $F^{(d,e,k)}_{k+de-d}(a)$ to be the left hand side of (\ref{SMPBPbetadef}):
\begin{equation}\label{Flastgen}
  F^{(d,e,k)}_{k+de-d}(a) := F^{(d,e,k)}_{k+de-d}(a,q) := 
  \sum_{n\geqq 0} a^{en} q^{en} \beta^{(d,e,k)}_n (a^e, 0, q^e).
\end{equation}
Then allow $F^{(d,e,k)}_1 (a)$, $F^{(d,e,k)}_2 (a)$, \dots,  $F^{(d,e,k)}_{k+de-d-1} (a)$
to be determined by 
\begin{eqnarray}
  F^{(d,e,k)}_{1} (a) &=& \frac{1-aq^d}{(a^e q^e;q^e)_d} F^{(d,e,k)}_{de-d+k} (aq^d)
   \label{Fqde1}\\
  F^{(d,e,k)}_{i} (a) &=& F^{(d,e,k)}_{i-1} (a)
    + \frac{(1-aq^d) a^{i-1} q^{d(i-1)}}{(a^e q^e;q^e)_d} F^{(d,e,k)}_{de-d+k-i+1} 
    (aq^d) \label{Fqde2}
\end{eqnarray}
for  \[ 2\leqq i\leqq de-d+k.\]
This will establish the identities 
  \[ F^{(d,e,k)}_i (a) = Q^{(d,e,k)}_i (a), \]
provided the initial conditions 
\begin{equation} \label{Finitconds}
 F^{(d,e,k)}_1 (0) = F^{(d,e,k)}_2 (0) = \cdots = F^{(d,e,k)}_{de-d+k} (0) = 1
  \end{equation}
 are satisfied.
 
 \subsection{Examples}
  In \cite{avs:rrb}, I derived eighteen new Bailey pairs by explicitly finding
 (\ref{SMPBPbetadef}) for various values of $d$, $e$, and $k$, and deduced
 one or two identities associated for each.  With the $q$-difference equations
 now in hand, the full set of $k+d(e-1)$ identities can be deduced for a
 given $(d,e,k)$.

 \subsubsection{Special cases previously in the literature}
 Before displaying the new families of results associated with the Bailey pairs
 found in \cite{avs:rrb}, I should point out that many known families of 
 results, including classical results due to L. J. Rogers, arise from special cases of the
 standard multiparameter Bailey pair (hence its designation as ``standard").
 I summarize these in the following table:

  \begin{tabular}{|l|c|c|c|}
    \hline
  $(d,e,k)$ & Author/Identity & modulus & Reference \\
    \hline
  $(1,1,2)$ &Rogers-Ramanujan & 5 & \cite[p. 331--332]{ljr1894}\\
  $(1,1,k)$ &G.E. Andrews    &$2k+1$& \cite[p. 4082, (1.7)]{GEA:oddmoduli}\\  
  $(1,2,1)$ &L.J. Rogers      & 5 & \cite[pp. 331, 339]{ljr1894}\\
  $(1,2,2)$ &Rogers-Selberg   & 7 & \cite[p. 339, 342]{ljr1894}; \cite[p. 331]{ljr1917}\\
  $(1,3,2)$ &W.N. Bailey      & 9 & \cite[p. 422, (1.6)--(1.8)]{wnb1}\\
  $(1,6,3)$ &Verma-Jain       &17  &\cite[p. 247-248, (3.1)--(3.8)]{vj1}\\
  $(1,6,4)$ &Verma-Jain       & 19   &\cite[p. 248-250, (3.9)--(3.17]{vj1}\\
  $(2,1,1)$ &A.V. Sills        & 6 & \cite[(A.1)]{avs:rrt}\\  
  $(2,1,2)$ &L.J. Rogers      & 10 & \cite[p. 330 (2) lines 2, 3]{ljr1917}\\
  $(2,1,3)$ &L.J. Rogers      & 14 & \cite[p. 341, ex. 2]{ljr1894};
\cite[p. 329 (1) lines 2--3]{ljr1917}\\
  $(2,1,4)$ &A.V. Sills        & 18 & \cite[(A.4)--(A.7)]{avs:rrt}\\
  $(2,1,5)$ &Verma-Jain        & 22 & \cite[pp. 250-251, (3.18)--(3.22)]{vj1}\\
  $(2,3,4)$ &Verma-Jain     & 34 & \cite[pp. 252-253, (3.30)--(3.37)]{vj1}\\
  $(2,3,5)$ &Verma-Jain     & 38 & \cite[pp. 253-255, (3.39)--(3.46)]{vj1}\\
  $(3,1,3)$ &A.V. Sills       & 21 & \cite[(A.8)--(A.10)]{avs:rrt}\\ 
  $(3,1,4)$ &F.J. Dyson       & 27 & \cite[p. 434, (B1)--(B4)]{wnb1}\\
  $(3,1,5)$ &Verma-Jain       & 33 & \cite[pp. 255-256, (3.49)--(3.53)]{vj1}\\
  $(3,2,5)$ &Verma-Jain       & 51 & \cite[pp. 34-36, (8.18)--(8.29)]{vj2}\\
  $(3,2,6)$ &Verma-Jain       & 57 & \cite[pp. 32-34, (8.6)--(8.17)]{vj2}\\
  $(4,1,6)$ &A.V. Sills        & 52 & \cite[(A.25)]{avs:rrt}\\ 
  $(6,1,8)$ &Verma-Jain       &102 & \cite[pp. 40-41, (8.47)--(8.54)]{vj2}\\
  $(6,1,9)$ &Verma-Jain       &114 & \cite[pp. 38-39, (8.37)--(8.45)]{vj2}\\
 
 \hline
\end{tabular}

 \subsubsection{The five identities associated with $(d,e,k)=(1,2,4)$}\label{fullex}
 Let us now demonstrate in detail how a particular family of Rogers-Ramanujan-Bailey
 type identities is deduced.
Begin by noting that \cite[equation (3.9)]{avs:rrb} states 
 \begin{equation}\label{BP124}
\beta^{(1,2,4)}_n (a^2,0,q^2) =\sum_{r\geqq 0} 
  \frac{a^{2r} q^{2r^2}}{(-aq;q)_{2r} (q^2;q^2)_{r} (q^2;q^2)_{n-r}}.
\end{equation}
 Inserting (\ref{BP124}) into (\ref{WBL}) yields an $a$-generalization of
 a Rogers-Ramanujan type identity related to the modulus 11 due to Dennis Stanton
 ~\cite[p. 65, equation (6.4)]{ds}.  
Stanton presents this as an isolated identity, but
actually it is one of a family of $k+d(e-1)=5$ closely related identities.
Define
  \begin{equation} \label{F124-5}
    F^{(1,2,4)}_5 (a) := \sum_{n,r\geqq 0} \frac{a^{2n+4r} q^{2n^2 + 4nr + 4r^2}}
    {(-aq;q)_{2r} (q^2;q^2)_r (q^2;q^2)_n}.
  \end{equation}
  
  \begin{rmk} In order to obtain (\ref{F124-5}) from the $(d,e,k)=(1,2,4)$ case of
  (\ref{Flastgen}), the order of summation is reversed and $n$ is replaced by $n+r$.
  \end{rmk}
  
 Next, set up the system of $q$-difference equations:
 \begin{eqnarray}
  F^{(1,2,4)}_{1} (a) &=& \frac{1}{(1+aq)} F^{(1,2,4)}_5 (aq)
   \label{F124qde1}\\
  F^{(1,2,4)}_{2} (a) &=& F^{(1,2,4)}_1 (a)
    + \frac{ a q}{(1+aq)} F^{(1,2,4)}_4 (aq) \label{F124qde2}\\
   F^{(1,2,4)}_{3} (a) &=& F^{(1,2,4)}_2 (a)
    + \frac{ a^2 q^2}{(1+aq)} F^{(1,2,4)}_3 (aq) \label{F124qde3} \\
   F^{(1,2,4)}_{4} (a) &=& F^{(1,2,4)}_3 (a)
    + \frac{ a^3 q^3}{(1+aq)} F^{(1,2,4)}_2 (aq) \label{F124qde4} \\ 
   F^{(1,2,4)}_{5} (a) &=& F^{(1,2,4)}_4 (a)
    + \frac{ a^4 q^4}{(1+aq)} F^{(1,2,4)}_1 (aq) \label{F124qde5}.  
\end{eqnarray}
Given that we have an explicit formula for $F^{(1,2,4)}_5 (a)$, it is thus possible
to find, one by one, explicit formulas for each of the other $F^{(1,2,4)}_i (a)$.

Using (\ref{F124qde1}), it is immediate that
\begin{equation} \label{F124-1}
  F^{(1,2,4)}_1 (a) = \sum_{n,r\geqq 0} \frac{a^{2n+4r} q^{2n^2 + 4nr + 4r^2+2n+4r}}
    {(-aq;q)_{2r+1} (q^2;q^2)_r (q^2;q^2)_n}.
  \end{equation}
Next, using (\ref{F124qde5}) we find
\begin{eqnarray}
F^{(1,2,4)}_4 (a) &=& F^{(1,2,4)}_5 (a) - \frac{ a^4 q^4}{(1+aq)} F^{(1,2,4)}_1 (aq)
\nonumber\\
&=& \sum_{n,r\geqq 0} \frac{a^{2n+4r} q^{2n^2 + 4nr + 4r^2}}
    {(-aq;q)_{2r} (q^2;q^2)_r (q^2;q^2)_n}
    -\sum_{n,r\geqq 0} \frac{a^{2n+4r+4} q^{2n^2 + 4nr + 4r^2+4n+8r+4}}
    {(-aq;q)_{2r+2} (q^2;q^2)_r (q^2;q^2)_n} \nonumber\\
&=&\sum_{n,r\geqq 0} \frac{a^{2n+4r} q^{2n^2 + 4nr + 4r^2}}
    {(-aq;q)_{2r} (q^2;q^2)_r (q^2;q^2)_n}
    -\sum_{n,r\geqq 0} \frac{a^{2n+4r} q^{2n^2 + 4nr + 4r^2}}
    {(-aq;q)_{2r} (q^2;q^2)_{r-1} (q^2;q^2)_n} \nonumber\\
&=&\sum_{n,r\geqq 0} \frac{a^{2n+4r} q^{2n^2 + 4nr + 4r^2}}
    {(-aq;q)_{2r} (q^2;q^2)_r (q^2;q^2)_n}\big(1-(1-q^{2r})\big)\nonumber\\
&=&\sum_{n,r\geqq 0} \frac{a^{2n+4r} q^{2n^2 + 4nr + 4r^2+2r}}
    {(-aq;q)_{2r} (q^2;q^2)_r (q^2;q^2)_n},    
\end{eqnarray}
where the third equality follows by simply by shifting the index $r$ to $r-1$ in the
second summation.
 
   Using (\ref{F124qde2}), we find
\begin{equation}\label{F124-2}
 F^{(1,2,4)}_2 (a) = \sum_{n,r\geqq 0} \frac{a^{2n+4r} q^{2n^2 + 4nr + 4r^2+2n+4r}}
    {(-aq;q)_{2r} (q^2;q^2)_r (q^2;q^2)_n}.
 \end{equation}
 
 And finally, via (\ref{F124qde4}),
 \begin{eqnarray}
 F^{(1,2,4)}_3 (a) &=& \sum_{n,r\geqq 0} \frac{a^{2n+4r} q^{2n^2 + 4nr + 4r^2+2r}}
    {(-aq;q)_{2r} (q^2;q^2)_r (q^2;q^2)_n}
    - \sum_{n,r\geqq 0} \frac{a^{2n+4r+3} q^{2n^2 + 4nr + 4r^2+4n+8r+3}}
    {(-aq;q)_{2r+1} (q^2;q^2)_r (q^2;q^2)_n}\nonumber\\
 &=& \sum_{n,r\geqq 0} \frac{a^{2n+4r} q^{2n^2 + 4nr + 4r^2+2r}}
    {(-aq;q)_{2r} (q^2;q^2)_r (q^2;q^2)_n}
    - \sum_{n,r\geqq 0} \frac{a^{2n+4r+1} q^{2n^2 + 4nr + 4r^2+4r+1}}
    {(-aq;q)_{2r+1} (q^2;q^2)_r (q^2;q^2)_n}\nonumber\\   
 &=& \sum_{n,r\geqq 0} \frac{a^{2n+4r} q^{2n^2 + 4nr + 4r^2+2r}(1+aq^{2n+2r+1})}
    {(-aq;q)_{2r+1} (q^2;q^2)_r (q^2;q^2)_n}. \label{F124-3}
 \end{eqnarray}
 It is easily checked that the initial conditions 
 $F^{(1,2,4)}_1 (0) = F^{(1,2,4)}_2 (0) = F^{(1,2,4)}_3 (0) =
 F^{(1,2,4)}_4 (0) = F^{(1,2,4)}_5 (0)$ are satisfied, thus we have 
 \begin{equation} \label{FQ124} F^{(1,2,4)}_i (a) = Q^{(1,2,4)}_i (a) 
 \end{equation} for $1\leqq i\leqq 5$.
 By setting $a=1$ in (\ref{FQ124}), with the aid of Proposition~\ref{Q1},
 we obtain the following family of Rogers-Ramanujan type identities:
 \begin{eqnarray}
 \sum_{n,r\geqq 0} \frac{q^{2n^2 + 4nr + 4r^2+2n+4r}}
    {(-q;q)_{2r+1} (q^2;q^2)_r (q^2;q^2)_n} = 
    \frac{(q,q^{10},q^{11};q^{11})_\infty}{(q^2;q^2)_\infty}\\
    \sum_{n,r\geqq 0} \frac{q^{2n^2 + 4nr + 4r^2+2n+4r}}
    {(-q;q)_{2r} (q^2;q^2)_r (q^2;q^2)_n} 
    = \frac{(q^2,q^9,q^{11};q^{11})_\infty}{(q^2;q^2)_\infty}\\
   \sum_{n,r\geqq 0} \frac{ q^{2n^2 + 4nr + 4r^2+2r}(1+q^{2n+2r+1})}
    {(-q;q)_{2r+1} (q^2;q^2)_r (q^2;q^2)_n} 
     = \frac{(q^3,q^8,q^{11};q^{11})_\infty}{(q^2;q^2)_\infty}\\
   \sum_{n,r\geqq 0} \frac{q^{2n^2 + 4nr + 4r^2+2r}}
    {(-q;q)_{2r} (q^2;q^2)_r (q^2;q^2)_n}  
   = \frac{(q^4,q^7,q^{11};q^{11})_\infty}{(q^2;q^2)_\infty} \\ 
   \sum_{n,r\geqq 0} \frac{ q^{2n^2 + 4nr + 4r^2}}
    {(-q;q)_{2r} (q^2;q^2)_r (q^2;q^2)_n}  
    = \frac{(q^5,q^6,q^{11};q^{11})_\infty}{(q^2;q^2)_\infty}
 \end{eqnarray}
 Note that this family of five identities related to the modulus 11 is different from those of 
 Andrews~\cite[p. 4082, equation (1.7) with $k=5$]{GEA:oddmoduli}, and 
 \cite[pp. 332-333, equations (1.10)--(1.14)]{GEA:mod11}.

\subsubsection{The family of four identities associated with $(d,e,k)=(1,2,3)$}
In a completely analogous manner, additional families of results may be deduced.
\begin{equation}\label{BP123}
\beta^{(1,2,3)}_n (a^2,0,q^2) = \frac{1}{(-q;q)_n} \sum_{r\geqq 0} 
  \frac{a^r q^{r^2}}{(q;q)_r (q;q)_{n-r} (-aq;q)_{n+r}}.
 \end{equation} by \cite[equation (3.8)]{avs:rrb}.
 
  \begin{gather}
\sum_{n,r\geqq 0} \frac{q^{2n^2+3r^2+4nr+2r+3r}}
{(-q;q)_{n+r}(-q;q)_{n+2r+1}
(q;q)_r (q;q)_n} = \frac{(q,q^8,q^9;q^9)_\infty}{(q^2;q^2)_\infty} \\
\sum_{n,r\geqq 0} \frac{q^{2n^2+3r^2+4nr+2r+3r} (1+q^{r+1}-q^{n+r+1}+q^{n+2r+1})}
{(-q;q)_{n+r}(-q;q)_{n+2r+1}
(q;q)_r (q;q)_n} = \frac{(q^2,q^7,q^9;q^9)_\infty}{(q^2;q^2)_\infty}\\
\sum_{n,r\geqq 0} \frac{q^{2n^2+3r^2+4nr+r} (1-q^n+q^{n+r})}
{(-q;q)_{n+r}(-q;q)_{n+2r}
(q;q)_r (q;q)_n} = \frac{(q^3;q^3)_\infty}{(q^2;q^2)_\infty} \\
\sum_{n,r\geqq 0} \frac{q^{2n^2+3r^2+4nr} }
{(-q;q)_{n+r}(-q;q)_{n+2r}
(q;q)_r (q;q)_n} = \frac{(q^4,q^5,q^9;q^9)_\infty}{(q^2;q^2)_\infty} 
   \end{gather}

\subsubsection{The family of three identities associated with $(d,e,k)=(1,3,1)$}

\begin{equation}\label{BP131}
  \beta^{(1,3,1)}_n (a^3,0,q^3) = 
\frac{(-1)^n a^{-n} q^{-\binom{n+1}{2}}(q;q)_n}{(q^3;q^3)_n (aq;q)_{2n}}
\sum_{r\geqq 0} \frac{(-1)^r q^{\binomial{r+1}{2}-nr}(aq;q)_{n+r} (aq;q)_{2n+r}}
{(q;q)_r (q;q)_{n-r} (a^3 q^3;q^3)_{n+r} }
\end{equation} by~\cite[equation (3.10)]{avs:rrb}.

\begin{gather}
\sum_{n,r\geqq 0} \frac{(-1)^n q^{\frac 52 n^2 + 4nr + 2r^2 +\frac 32 n + 2r}
(q;q)_{2n+3r+1} (q;q)_{n+2r+1} (q;q)_{n+r} }
{(q^3;q^3)_{n+r} (q;q)_n (q;q)_r (q^3;q^3)_{n+2r+1} (q;q)_{2n+2r+1} }
= \frac{(q,q^6,q^7;q^7)_\infty}{(q^3;q^3)_\infty}\\
1-\sum_{n,r\geqq 0} \frac{(-1)^n q^{\frac 52 n^2 + 4nr + 2r^2 + \frac 92 n
+ 5r + 2}(q;q)_{2n+3r+1} (q;q)_{n+r+1} (q;q)_{n+2r+1} (1+q^{n+r+1}) }
{(q^3;q^3)_{n+r+1} (q;q)_n (q;q)_r (q^3;q^3)_{n+2r+1} (q;q)_{2n+2r+2} } 
= \frac{(q^2,q^5,q^7;q^7)_\infty}{(q^3;q^3)_\infty}\\
\sum_{n,r\geqq 0} \frac{(-1)^n q^{\frac 52 n^2 + 4nr + 2r^2 - \frac 12 n}
(q;q)_{2n+3r} (q;q)_{n+2r} (q;q)_{n+r} }
{(q;q)_{2n+2r} (q;q)_n (q;q)_r (q^3;q^3)_{n+2r} (q^3;q^3)_{n+r}}
=\frac{(q^3,q^4,q^7;q^7)_\infty}{(q^3;q^3)_\infty}
\end{gather}

\subsubsection{The family of four identities associated with $(d,e,k)=(1,4,1)$}

 \begin{equation}\label{BP141}
 \beta^{(1,4,1)}_n (a^4,0,q^4) = \frac{(-1)^n q^{2n^2}}{(-a^2 q^2;q^2)_{2n}}
  \sum_{r\geqq 0} \frac{ q^{3r^2-4nr} }{(q^2;q^2)_r (-aq;q)_{2r} (q^4;q^4)_{n-r}}
 \end{equation} by~\cite[equation (3.13)]{avs:rrb}.
 \begin{gather}
\sum_{n,r\geqq 0}
\frac{(-1)^{n+r} q^{6n^2+8nr+5r^2+4n+4r}}{(-q^2;q^2)_{2n+2r+1} (q^2;q^2)_r 
(-q;q)_{2r+1} (q^4;q^4)_n}
= \frac{(q,q^8,q^9;q^9)_\infty}{(q^4;q^4)_\infty}\\
\sum_{n,r\geqq 0}
\frac{(-1)^{n+r} q^{6n^2+8nr+5r^2+4n+2r}}{(-q^2;q^2)_{2n+2r+1} 
(q^2;q^2)_r (-q;q)_{2r} (q^4;q^4)_n}
= \frac{(q^2,q^7,q^9;q^9)_\infty}{(q^4;q^4)_\infty}\\
\sum_{n,r\geqq 0}
\frac{(-1)^{n+r} q^{6n^2+8nr+5r^2-2r}}{(-q^2;q^2)_{2n+2r} (q^2;q^2)_r (-q;q)_{2r}
(q^4;q^4)_n}
= \frac{(q^3;q^3)_\infty}{(q^4;q^4)_\infty}\\
\sum_{n,r\geqq 0}
\frac{(-1)^{n+r} q^{6n^2+8nr+5r^2}}{(-q^2;q^2)_{2n+2r} (q^2;q^2)_r (-q;q)_{2r}
(q^4;q^4)_n}
= \frac{(q^4,q^5,q^9;q^9)_\infty}{(q^4;q^4)_\infty}
\end{gather}

\subsubsection{The family of seven identities associated with $(d,e,k)=(1,4,4)$}
\begin{equation}\label{BP144}
 \beta^{(1,4,4)}_n (a^4,0,q^4) = \frac{1}{(-a^2 q^2;q^2)_{2n}}
  \sum_{r\geqq 0} \frac{ a^{2r} q^{2r^2} }{(q^2;q^2)_r (-aq;q)_{2r} (q^4;q^4)_{n-r}}
 \end{equation} by \cite[equation (3.16)]{avs:rrb}. 
 
 \begin{gather}
\sum_{n,r\geqq 0}
 \frac{q^{4n^2 + 8nr + 6r^2+4n+6r}}
 {(-q^2;q^2)_{2n+2r+1} (q^2;q^2)_r (-q;q)_{2r+1} (q^4;q^4)_n}
= \frac{(q,q^{14},q^{15};q^{15})_\infty}{(q^4;q^4)_\infty}\\
\sum_{n,r\geqq 0}
 \frac{q^{4n^2 + 8nr + 6r^2+4n+6r}}
 {(-q^2;q^2)_{2n+2r+1} (q^2;q^2)_r (-q;q)_{2r} (q^4;q^4)_n}
= \frac{(q^2,q^{13},q^{15};q^{15})_\infty}{(q^4;q^4)_\infty}\\
\sum_{n,r\geqq 0}
 \frac{q^{4n^2 + 8nr + 6r^2+4n+6r} (1+q^{2r+1}+2q^{2r+2} +q^{4r+3}+ q^{4n+4r+4})}
 {(-q^2;q^2)_{2n+2r+1} (q^2;q^2)_r (-q;q)_{2r+2} (q^4;q^4)_n}
= \frac{(q^3,q^{12},q^{15};q^{15})_\infty}{(q^4;q^4)_\infty}\\
\sum_{n,r\geqq 0}
 \frac{q^{4n^2 + 8nr + 6r^2-2} 
 (q^{4r}+q^{4r+2} +q^{6r+1} + q^{4n+6r+3}-1-q^{2r+1})}
 {(-q^2;q^2)_{2n+2r} (q^2;q^2)_r (-q;q)_{2r+1} (q^4;q^4)_n}
= \frac{(q^4,q^{11},q^{15};q^{15})_\infty}{(q^4;q^4)_\infty}\\
\sum_{n,r\geqq 0}
 \frac{q^{4n^2 + 8nr + 6r^2+2r} (1+q^{4n+2r+1})}{(-q^2;q^2)_{2n+2r} (q^2;q^2)_r (-q;q)_{2r+1} 
 (q^4;q^4)_n}
= \frac{(q^5;q^5)_\infty}{(q^4;q^4)_\infty}\\
\sum_{n,r\geqq 0}
 \frac{q^{4n^2 + 8nr + 6r^2+2r}}{(-q^2;q^2)_{2n+2r} (q^2;q^2)_r (-q;q)_{2r} (q^4;q^4)_n}
= \frac{(q^6,q^9,q^{15};q^{15})_\infty}{(q^4;q^4)_\infty}\\
 \sum_{n,r\geqq 0}
 \frac{q^{4n^2 + 8nr + 6r^2}}{(-q^2;q^2)_{2n+2r} (q^2;q^2)_r (-q;q)_{2r} (q^4;q^4)_n}
= \frac{(q^7,q^8,q^{15};q^{15})_\infty}{(q^4;q^4)_\infty}
\end{gather}

 \subsubsection{The familiy of four identities associated with $(d,e,k)=(2,2,2)$.}
 \begin{equation}\label{BP222}
\beta^{(2,2,2)}_n (a^2,0,q^2) =\frac{(-1)^n q^{n^2}}{(-aq;q)_{2n} }
\sum_{r\geqq 0} 
  \frac{(-1)^r q^{\frac 32 r^2 - \frac 12 r - 2nr}}
{(aq;q^2)_{r} (q;q)_{r} (q^2;q^2)_{n-r}}
 \end{equation} by~\cite[equation (3.20)]{avs:rrb}.
 
 \begin{gather}
\sum_{n,r\geqq 0} 
  \frac{ (-1)^n q^{3n^2+4n+4nr+\frac 52 r^2+\frac 72 r}}{(-q;q)_{2n+2r+2}  (q;q)_r 
  (q;q^2)_{r+1} (q^2;q^2)_n}
 = \frac{(q^2,q^{16},q^{18};q^{18})_\infty}{(q^2;q^2)_\infty}\\
 \sum_{n,r\geqq 0} 
  \frac{ (-1)^n q^{3n^2+4n+4nr+\frac 52 r^2+4n+\frac 32 r}(q^{2r+2}-q+q^r+q^{r+1})}
  {(-q;q)_{2n+2r+2}  (q;q)_r  (q;q^2)_{r+1} (q^2;q^2)_n}
 = \frac{(q^4,q^{14},q^{18};q^{18})_\infty}{(q^2;q^2)_\infty}\\
 \sum_{n,r\geqq 0} 
  \frac{ (-1)^n q^{3n^2+4nr+\frac 52 r^2-\frac 52 r}(q^{r}+q^{r+1}-q)}
  {(-q;q)_{2n+2r}  (q;q)_r  (q;q^2)_r (q^2;q^2)_n}
 = \frac{(q^6;q^6)_\infty}{(q^2;q^2)_\infty}\\
\sum_{n,r\geqq 0} 
  \frac{ (-1)^n q^{3n^2+4nr+\frac 52 r^2-\frac r2}}{(-q;q)_{2n+2r}  (q;q)_r 
  (q;q^2)_r (q^2;q^2)_n}
 = \frac{(q^8,q^{10},q^{18};q^{18})_\infty}{(q^2;q^2)_\infty}
\end{gather} 
 
\subsubsection{The familiy of five identities associated with $(d,e,k)=(2,2,3)$.}
\begin{equation}\label{BP223}
\beta^{(2,2,3)}_n (a^2,0,q^2) =
\frac{(aq^2;q^2)_n}{(a^2 q^2;q^2)_{2n} }\sum_{r\geqq 0} 
  \frac{a^{r} q^{2nr}}{(q^2;q^2)_{r}  (q^2;q^2)_{n-r}}
 \end{equation} by~\cite[equation (3.21)]{avs:rrb}.

\begin{gather}
\sum_{n,r\geqq 0} \frac{q^{n^2+2r^2+3nr+2n+3r} (q;q)_{n+r+1}}
{(q;q)_{2n+2r+2} (q;q)_r (q;q)_n} 
= \frac{(q,q^{10},q^{11};q^{11})_\infty}{(q;q)_\infty}\\
\sum_{n,r\geqq 0} \frac{q^{n^2+2r^2+3nr+2n+3r} (q;q)_{n+r+1} 
(1+q^{n+1}+q^n-q^{2n})}
{(q;q)_{2n+2r+2} (q;q)_r (q;q)_n} 
= \frac{(q^2,q^9,q^{11};q^{11})_\infty}{(q;q)_\infty}\\
\sum_{n,r\geqq 0} \frac{q^{n^2+2r^2+3nr} (q;q)_{n+r} 
[(q^{n-1}+q^n-q^{2n-1})-(q^{-1}+q^{n-2}+q^{n-1}-q^{2n-4})(1-q^{n-1})(1-q^n)]}
{(q;q)_{2n+2r} (q;q)_r (q;q)_n} \nonumber\\
= \frac{(q^3,q^8,q^{11};q^{11})_\infty}{(q;q)_\infty}\label{messy}\\
\sum_{n,r\geqq 0} \frac{q^{n^2+2r^2+3nr} (q;q)_{n+r} 
(q^{n-1}+q^n-q^{2n-1})}
{(q;q)_{2n+2r} (q;q)_r (q;q)_n} 
= \frac{(q^4,q^7,q^{11};q^{11})_\infty}{(q;q)_\infty}\\
\sum_{n,r\geqq 0} \frac{q^{n^2+2r^2+3nr} (q;q)_{n+r} }
{(q;q)_{2n+2r} (q;q)_r (q;q)_n} 
= \frac{(q^5,q^6,q^{11};q^{11})_\infty}{(q;q)_\infty}
\end{gather}

\subsubsection{The familiy of six identities associated with $(d,e,k)=(2,2,4)$.}
\begin{equation}\label{BP224}
\beta^{(2,2,4)}_n (a^2,0,q^2) =
\frac{(aq^2;q^2)_n}{(a^2 q^2;q^2)_{2n} }\sum_{r\geqq 0} 
  \frac{a^{r} q^{2r^2}}{(q^2;q^2)_{r}  (q^2;q^2)_{n-r}}
 \end{equation} by~\cite[equation (3.22)]{avs:rrb}.

\begin{gather}
\sum_{n,r\geqq 0} \frac{q^{n^2+2r^2+2nr+2n+3r} (q;q)_{n+r+1}}
{(q;q)_{2n+2r+2} (q;q)_r (q;q)_n} 
= \frac{(q,q^{12},q^{13};q^{13})_\infty}{(q;q)_\infty}\\
\sum_{n,r\geqq 0} \frac{q^{n^2+2r^2+2nr+2n+3r} (q;q)_{n+r+1}
(1+q^{r+1}+q^{n+1}-q^{n+r+1})}
{(q;q)_{2n+2r+2} (q;q)_r (q;q)_n} 
= \frac{(q^2,q^{11},q^{13};q^{13})_\infty}{(q;q)_\infty}\\
\sum_{n,r\geqq 0} \frac{q^{n^2+2r^2+2nr+2n+3r} (q;q)_{n+r+1}
}{(q;q)_{2n+2r+2} (q;q)_r (q;q)_n} 
\Big( (1-q^{n+r+1})(q^r+q^n)(1+q) - q^{n+r} + q^{2r+2} + q^{2n+2} + q^{2n+2r+1}
+ q^{n+r+2} \Big)\nonumber\\
= \frac{(q^3,q^{10},q^{13};q^{13})_\infty}{(q;q)_\infty} \label{ugly2}\\
\sum_{n,r\geqq 0} \frac{q^{n^2+2r^2+2nr-1} (q;q)_{n+r}
}{(q;q)_{2n+2r} (q;q)_r (q;q)_n} 
\Big( q^r-1+q^{2r+1}+q^{n+r+1}-q^{n+2r}+q^n+q^{2n+1}-q^{2n+r}-q^{n+2r+1}
-q^{2n+r+1}+q^{2n+2r} \Big)\nonumber\\
= \frac{(q^4,q^9,q^{13};q^{13})_\infty}{(q;q)_\infty}\label{ugly3}\\
\sum_{n,r\geqq 0} \frac{q^{n^2+2r^2+2nr} (q;q)_{n+r}
(q^n+q^r-q^{n+r})}{(q;q)_{2n+2r} (q;q)_r (q;q)_n} 
= \frac{(q^5,q^8,q^{13};q^{13})_\infty}{(q;q)_\infty}\\
\sum_{n,r\geqq 0} \frac{q^{n^2+2r^2+2nr} (q;q)_{n+r}
}{(q;q)_{2n+2r} (q;q)_r (q;q)_n} 
= \frac{(q^6,q^7,q^{13};q^{13})_\infty}{(q;q)_\infty}
\end{gather}
 
\section{Conclusion}\label{conc}
A full family of $k+d(e-1)$ identities could be worked out for any $(d,e,k)$.  
Of course, the serendipitous cancellations which arise in the calculations of
\S\ref{fullex} are not guaranteed to occur everywhere.  For instance, despite my best
efforts, I could not find neater looking representations for the left hand sides
of (\ref{messy}), (\ref{ugly2}), and (\ref{ugly3}). 
 Nonetheless, in many of the cases explored here, I was fortunate
enough to be able to add some attractive identities to the literature.

Furthermore, it should be noted that the 
$Q^{(d,e,k)}_i(a)$ studied here generalize the
$Q_{d,k,i} (a)$ I studied 
previously in \cite{avs:rrt}, which in turn generalizes some of the
ideas in Andrews~\cite{gea:qDiff}.   In fact, the $Q_{d,k,i}(a)$ of
\cite{avs:rrt} is precisely the $e=1$ case of the $Q^{(d,e,k)}_i(a)$ in
this present work.

  The $q$-difference equations associated with the $e=1$ case led to a breakthough in
understanding of the combinatorics of a large class of both new and classical
Rogers-Ramanujan type identities~\cite{avs:rrt}. 
It is hoped that the $q$-difference equations herein can help shed some light
on the combinatorics of the identities for additional values of $e$, or even
more optimistically, for general $e$.

\end{document}